\newcommand{\nc}{\newcommand}
\nc{\eg}{\mathfrak{e} } \nc{\fg}{\mathfrak{f} } \nc{\vg}{\mathfrak{v} } \nc{\wg}{\mathfrak{w} }
\nc{\zg}{\mathfrak{z} } \nc{\ngo}{\mathfrak{n} } \nc{\kg}{\mathfrak{k} }
\nc{\mg}{\mathfrak{m} } \nc{\bg}{\mathfrak{b} } \nc{\ggo}{\mathfrak{g} }
\nc{\ggob}{\overline{\mathfrak{g}} } \nc{\sog}{\mathfrak{so} }
\nc{\sug}{\mathfrak{su} } \nc{\spg}{\mathfrak{sp} } \nc{\slg}{\mathfrak{sl} }
\nc{\glg}{\mathfrak{gl} } \nc{\cg}{\mathfrak{c} } \nc{\rg}{\mathfrak{r} }
\nc{\hg}{\mathfrak{h} } \nc{\tg}{\mathfrak{t} } \nc{\ug}{\mathfrak{u} }
\nc{\dg}{\mathfrak{d} } \nc{\ag}{\mathfrak{a} } \nc{\pg}{\mathfrak{p} }
\nc{\sg}{\mathfrak{s} } \nc{\affg}{\mathfrak{aff} }
\nc{\pca}{\mathcal{P}} \nc{\nca}{\mathcal{N}} \nc{\lca}{\mathcal{L}}
\nc{\oca}{\mathcal{O}} \nc{\mca}{\mathcal{M}} \nc{\tca}{\mathcal{T}}
\nc{\aca}{\mathcal{A}} \nc{\cca}{\mathcal{C}} \nc{\gca}{\mathcal{G}}
\nc{\sca}{\mathcal{S}} \nc{\hca}{\mathcal{H}} \nc{\bca}{\mathcal{B}}
\nc{\dca}{\mathcal{D}} \nc{\zca}{\mathcal{Z}}
\nc{\val}{\operatorname{val}}
\nc{\vp}{\varphi} \nc{\ddt}{\tfrac{d}{dt}} \nc{\dds}{\tfrac{{\rm d}}{{\rm d}s}}
\nc{\dpar}{\tfrac{\partial}{\partial t}} \nc{\im}{\mathtt{i}}
\nc{\SO}{\mathrm{SO}} \nc{\Spe}{\mathrm{Sp}} \nc{\Sl}{\mathrm{SL}}
\nc{\SU}{\mathrm{SU}} \nc{\Or}{\mathrm{O}} \nc{\U}{\mathrm{U}} \nc{\Gl}{\mathrm{GL}}
\nc{\Se}{\mathrm{S}} \nc{\Cl}{\mathrm{Cl}} \nc{\Spein}{\mathrm{Spin}}
\nc{\Pin}{\mathrm{Pin}} \nc{\G}{\mathrm{GL}_n(\RR)} \nc{\g}{\mathfrak{gl}_n(\RR)}
\nc{\RR}{{\Bbb R}} \nc{\HH}{{\Bbb H}} \nc{\CC}{{\Bbb C}} \nc{\ZZ}{{\Bbb Z}}
\nc{\FF}{{\Bbb F}} \nc{\NN}{{\Bbb N}} \nc{\QQ}{{\Bbb Q}} \nc{\PP}{{\Bbb P}}
\nc{\vs}{\vspace{.2cm}} \nc{\vsp}{\vspace{1cm}} \nc{\ip}{\langle\cdot,\cdot\rangle}
\nc{\ipp}{(\cdot,\cdot)} \nc{\la}{\langle} \nc{\ra}{\rangle} \nc{\unm}{\tfrac{1}{2}}
\nc{\unc}{\tfrac{1}{4}} \nc{\und}{\tfrac{1}{16}} \nc{\no}{\vs\noindent}
\nc{\lamkn}{\Lambda^2(\RR^{q+n})^*\otimes\RR^{q+n}} \nc{\lamn}{\Lambda^2(\RR^n)^*\otimes\RR^n} \nc{\lamp}{\Lambda^2\pg^*\otimes\pg}
\nc{\lamg}{\Lambda^2\ggo^*\otimes\ggo} \nc{\lamngo}{\Lambda^2\ngo^*\otimes\ngo}
\nc{\tangz}{{\rm T}^{\rm Zar}} \nc{\mum}{/\!\!/} \nc{\kir}{/\!\!/\!\!/}
\nc{\Ri}{\tfrac{4\Ricci_{\mu}}{||\mu||^2}} \nc{\ds}{\displaystyle}
\nc{\ben}{\begin{enumerate}} \nc{\een}{\end{enumerate}} \nc{\f}{\frac}
\nc{\lb}{[\cdot,\cdot]} \nc{\isn}{\tfrac{1}{||v||^2}}
\nc{\gkp}{(\ggo=\kg\oplus\pg,\ip)} \nc{\ukh}{(\ug=\kg\oplus\hg,\ip)}
\nc{\Hess}{\operatorname{Hess}} \nc{\ad}{\operatorname{ad}}
\nc{\Ad}{\operatorname{Ad}} \nc{\rank}{\operatorname{rank}}
\nc{\Irr}{\operatorname{Irr}} \nc{\End}{\operatorname{End}}
\nc{\Aut}{\operatorname{Aut}} \nc{\Inn}{\operatorname{Inn}}
\nc{\Der}{\operatorname{Der}} \nc{\Ker}{\operatorname{Ker}}
\nc{\Iso}{\operatorname{I}} \nc{\Diff}{\operatorname{Diff}}
\nc{\Lie}{\operatorname{Lie}} \nc{\tr}{\operatorname{tr}} \nc{\dif}{\operatorname{d}}
\nc{\sen}{\operatorname{sen}} \nc{\modu}{\operatorname{mod}}
\nc{\Riem}{\operatorname{Rm}} \nc{\Ricci}{\operatorname{Ric}}
\nc{\sym}{\operatorname{sym}} \nc{\symac}{\operatorname{sym^{ac}}}
\nc{\symc}{\operatorname{sym^{c}}} \nc{\scalar}{\operatorname{R}}
\nc{\grad}{\operatorname{grad}} \nc{\ricci}{\operatorname{Rc}}
\nc{\nr}{\operatorname{nr}} \nc{\riccic}{\operatorname{ric^{c}}}
\nc{\riccig}{\operatorname{ric^{\gamma}}} \nc{\Rin}{\operatorname{M}}
\nc{\Le}{\operatorname{L}} \nc{\tang}{\operatorname{T}}
\nc{\level}{\operatorname{level}} \nc{\rad}{\operatorname{r}}
\nc{\abel}{\operatorname{ab}} \nc{\CH}{\operatorname{CH}}
\nc{\mcc}{\operatorname{mcc}} \nc{\Adj}{\operatorname{Adj}}
\nc{\Order}{\operatorname{O}} \nc{\mm}{\operatorname{M}}
\nc{\inj}{\operatorname{inj}}  \nc{\Pf}{\operatorname{Pf}}   \nc{\pf}{\operatorname{pf}}
\nc{\vol}{\operatorname{vol}} \nc{\Diag}{\operatorname{Diag}}
\theoremstyle{plain}
\newtheorem{theorem}{Theorem}[section]
\theoremstyle{definition}
\newtheorem{definition}[theorem]{Definition}
\theoremstyle{remark}
\newtheorem{remark}[theorem]{Remark}
\newtheorem{example}[theorem]{Example}
\newcommand{\variable}{\Theta}
\newcommand{\variabledos}{\Phi}
\newcommand{\espacio}{\:}
\newcommand{\varuno}{x}
\newcommand{\vardos}{y}
\newcommand{\vartres}{z}
\title[Explicit linear pfaffian representations]{Explicit linear pfaffian representations\\ of plane curves up to degree 5 }
\author{David Oscari}
\address{FaMAF and CIEM, Universidad Nacional de C\'ordoba, C\'ordoba, Argentina}
\email{oscari@famaf.unc.edu.ar}
\thanks{This research was partially supported by grants from CONICET, FonCyT (Argentina) and SeCyT (Universidad Nacional de C\'ordoba)}
\begin{document}

\maketitle
\begin{abstract}Let $R$ be a commutative ring with $1$. We prove that every homogeneous polynomial $f(x_0,x_1,x_2)$ in $R[x_0,x_1,x_2]$ up to degree 5 admits a
linear Pfaffian $R$-representation. We  believe that conceptually we give the shortest self-contained proof possible: we exhibit explicitly such a
representation. In this sense, we generalize (up to degree 5) a result due to A. Beauville \cite{Bea} about the existence of Pfaffian representations for any smooth
plane curve of any degree. 
\end{abstract}


 \section{Introduction}
 This note can be considered as a derivation of \cite{LO}, in collaboration with J. Lauret, where we explore nonsingular 2-step nilpotent Lie algebras
 $\ngo=\ngo_1\oplus[\ngo,\ngo]$ over $\mathbb{R}$ in several directions, among them we consider the problem of existence and non-existence of canonical inner
 products (Einstein nilsolitons) for such algebras of type $(p,q)=(3,8)$, i.e. $\dim [\ngo,\ngo]=p$ and $\dim \ngo_1=q$. In {\it loc. cit.}, we exhibit explicit
 continuous families of pairwise non-isomorphic nonsingular algebras.

To determine if two given 2-step nilpotent Lie algebras are isomorphic or not is a hard \mbox{problem.} For that, we associate to each real $2$-step nilpotent Lie
algebra of type $(p,q)=(3,8)$ its {\it Paffian form} (homogeneous polynomial in $\mathbb{R}[x,y,z]$ of degree 4) which has been a very useful tool to distinguish these
algebras up to isomorphism.

A question that arises naturally in this context is the following. Inversely, for a given homogeneous polynomial $f$ in $\mathbb{R}[x,y,z]$ of degree 4, to find, if it
exists, a $2$-step nilpotent Lie algebra over $\mathbb{R}$ of type $(3,8)$ such that its Pfaffian form is  $f(x,y,z)$. 

Indeed, this is a theoretical-Lie algebra version of a classical problem:
\begin{equation}\label{problema}
    \begin{minipage}{0.9\textwidth}
    Let $R$ be a commutative ring with $1$. For a given homogeneous polynomial $f(x_0,\dots,x_n)$ in $R[x_0,\dots,x_n]$ of degree $d$, determine if there exists a
    linear Pfaffian $R$-representation of $f(x_0,\dots,x_n)$.  
    \end{minipage}\tag{$\ast$}
  \end{equation}
Let us
recall that a linear Pfaffian $R$-representation of a homogeneous polynomial $f$ in $R[x_0,\dots,x_n]$ of degree $d$ is a $2d\times2d$ skew-symmetric matrix $M=[m_{ij}]$
such that $\det(M)=f(x_0,\dots,x_n)^2$ where the entries $m_{ij}$ are linear forms in $R[x_0,\dots,x_n]$ (see Section 2 for more precise definitions).


In the case $n=2$, Beauville showed, among others, that (\ref{problema}) has solution for any smooth plane curve in $\mathbb{K}[x,y,z]$ of degree $d\geq 2$
\cite[Proposition 5.1]{Bea}, where  $\mathbb{K}$ is an algebraically closed field of characteristic zero. Buckley and K\v{o}sir \cite{BK} parametrise all linear Pfaffian representations of a plane curve. Two Pfaffian $\mathbb{K}$-representations
$M$ and $M'$ are equivalent if there exists $X\in\text{GL}_{2d}(\mathbb{K})$ such that $M'=XMX^t$.
In \cite{B} the elementary transformations of linear Pfaffian representations are considered, proving that every two Pfaffian representations of a plane curve
$f(x_0,x_1,x_2)$ of degree $d$ can be bridged by a finite sequence of elementary transformations.


In the case $n=3$, Beauville showed that every smooth cubic surface admits a linear Pfaffian representation \cite[Proposition 7.6(a)]{Bea}. Then in \cite{FM} is proved
that every cubic surface admits such a linear Pfaffian representation.

The fact that a {\it general} surface of degree $d$ admits a linear Pfaffian  representation if and only if $d\leq15$ had been proved by Beauville-Schreyer
\cite[Proposition 7.6(b)]{Bea}. When the degree $d=4$ in \cite{CKM} the previous result is generalized: every smooth quartic surface admits a linear
Pfaffian representation. Faenzi \cite{F} proved that a general surface of degree $d$ admits a almost quadratic Pfaffian representation if and only if $d\leq15$.
In \cite{CF} the Pfaffian representations with entries almost linear of general surfaces  are considered.


The author was motivated by the scarcity of explicit Pfaffian representations in the literature.

Independently, Tanturri and Han were interested on the explicit construction of linear Pfaffian representations for cubic surfaces.
In \cite{T} an algorithm is provided whose inputs are a cubic surface $f(x,y,z,t)$ in $\mathbb{K}[x,y,z,t]$  and a zero ${\mathbf a}$ in $\mathbb{P}_\mathbb{K}^3$
and whose output is a linear Pfaffian $\mathbb{K}$-representation of $f$, under assumptions on $f$ and $\mathbf{a}$, and where $\mathbb{K}$ is a field of characteristic zero,
not necessarily algebraically closed. In \cite{H} first one chooses an inscribed pentahedron and, then one needs to find the unique Pfaffian bundle related to that pentahedron.

We focus on the case $n=2$ and degrees $d=2,3,4$, and $5$. Our main result is the following.


\begin{theorem}\label{Grado 4 y 5}
Let $R$ be a commutative ring with $1$. Every homogeneous polynomial $f(x,y,z)$ in $R[x,y,z]$ of degree $\leq 5$ admits a linear Pfaffian $R$-representation.
\end{theorem}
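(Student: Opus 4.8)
The plan is to recast the problem in matrix form and then write down the representing matrices explicitly. Since the entries $m_{ij}$ are required to be linear forms, a skew-symmetric $M=[m_{ij}]$ of size $2d$ is the same datum as a triple of constant skew-symmetric matrices $M_1,M_2,M_3\in R^{2d\times 2d}$ with $M=xM_1+yM_2+zM_3$. Because $\Pf(xM_1+yM_2+zM_3)$ is homogeneous of degree $d$ in $x,y,z$ with coefficients that are polynomial in the entries of the $M_i$, proving the theorem amounts to exhibiting, for each $d\le 5$ and each prescribed coefficient vector $(c_{ijk})_{i+j+k=d}$, skew-symmetric matrices $M_1,M_2,M_3$ — whose nonzero entries are constants or the $c_{ijk}$ themselves — such that the Pfaffian of $M$ is $f=\sum c_{ijk}x^iy^jz^k$. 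I would look for sparse matrices so that the combinatorial (perfect-matching) expansion of the Pfaffian collapses to exactly the monomials of $f$.

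The template is the degree-two case, which sets the pattern for all the others. Taking
$$
M=\begin{pmatrix}
0 & x & z & y\\
-x & 0 & d_1 & d_2\\
-z & -d_1 & 0 & d_3\\
-y & -d_2 & -d_3 & 0
\end{pmatrix},\qquad
\begin{aligned}
d_1&=c_{020}y+c_{011}z,\\
d_2&=-\,c_{002}z,\\
d_3&=c_{200}x+c_{110}y+c_{101}z,
\end{aligned}
$$
one computes $\Pf(M)=x\,d_3-z\,d_2+y\,d_1=\sum_{i+j+k=2}c_{ijk}x^iy^jz^k$ over any $R$. The mechanism is an ``arrow'' frame: the first row carries the bare variables $x,z,y$, while the complementary entries $d_1,d_2,d_3$ of the lower block carry the coefficients linearly, and the three matchings of a $4\times 4$ Pfaffian pair them with the correct signs. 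For degrees $3,4,5$ I would generalize this frame to the $2d\times2d$ setting: fix a pattern of constant entries ($0,\pm1$) so that the surviving matchings are in bijection with the monomials of $f$, and let the remaining entries carry the $c_{ijk}$. Two elementary building blocks help to assemble and to check such matrices: a block-diagonal sum multiplies Pfaffians, so a product of linear forms is trivially represented; and the hyperbolic block $\bigl(\begin{smallmatrix}0&A\\-A^{t}&0\end{smallmatrix}\bigr)$ has Pfaffian $\pm\det A$, which realizes any binary form through its $d\times d$ companion matrix, valid over an arbitrary $R$ with no division.

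The genuine obstacle is that the Pfaffian is not additive: a general ternary form does not factor, and splitting it as, say, $\ell\cdot g+b$ with $b$ binary cannot be handled by merely juxtaposing representations of the summands. To combine pieces one must introduce coupling (off-diagonal) blocks, and the bordered expansion $\Pf\bigl(\begin{smallmatrix}0&\ell&u^{t}\\-\ell&0&v^{t}\\-u&-v&N\end{smallmatrix}\bigr)=\ell\,\Pf(N)+C(u,v,N)$ shows that such coupling contributes a correction $C$ that is bilinear in the coupling vectors and involves the sub-Pfaffians of $N$. The heart of the proof is to choose the frame and the coupling so that this correction reproduces precisely the missing monomials of $f$ while introducing no spurious ones, for every coefficient vector simultaneously and over every commutative ring. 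This bookkeeping — matching all $\binom{d+2}{2}$ coefficients with correct signs and verifying the vanishing of all extraneous matchings — is what grows rapidly with $d$, and I expect it is exactly what confines an explicit, hand-verifiable argument to $d\le 5$; the final verification of each exhibited Pfaffian is then a direct expansion that can be carried out, and if necessary confirmed symbolically, entry by entry.
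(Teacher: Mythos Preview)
Your approach is exactly the paper's: exhibit explicit skew-symmetric matrices $M_1,M_2,M_3$ with entries depending polynomially on the coefficients of $f$, then verify the Pfaffian by direct expansion. Your $d=2$ example is correct and matches the paper's pattern. But as written this is a \emph{plan}, not a proof. The entire content of the theorem is the exhibition of the matrices for $d=3,4,5$; phrases like ``I would generalize this frame'', ``the heart of the proof is to choose the frame and the coupling so that\dots'', and ``the final verification \dots\ can be carried out'' defer precisely the work that constitutes the proof. Until those matrices are written down and their Pfaffians expanded, nothing has been established beyond $d=2$.

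There is also a concrete point where your expectations are likely too optimistic. You say you will look for matrices ``whose nonzero entries are constants or the $c_{ijk}$ themselves''. The paper calls such a representation \emph{nice} and finds one for $d\le 4$, but for $d=5$ its explicit $10\times 10$ representation has several entries that are genuine polynomials of degree two and three in the coefficients $\Phi_i$ (for example $a_{2,9}=-\Phi_{6}\Phi_{5}\Phi_{11}-\Phi_{15}\Phi_{5}^{2}-\Phi_{5}\Phi_{21}+\Phi_{16}$), and the paper explicitly leaves open whether a nice representation exists in degree~$5$. So the ``bookkeeping'' you anticipate for $d=5$ is not merely longer than for $d\le 4$; it forces you outside the template you describe. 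Your bordered-expansion heuristic and the block-diagonal/hyperbolic building blocks are reasonable guides for searching, but they do not by themselves produce the required matrices, and the coupling correction $C(u,v,N)$ you mention has to be engineered monomial by monomial --- that engineering \emph{is} the proof.
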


Section 2 is devoted to define precisely the Pfaffian  of a skew-symmetric matrix and a linear Pfaffian representation, and for completeness we solve the problem
(\ref{problema}) for degrees $d\leq3$ (Example \ref{ejemplo: grado 2 y 3}). Finally, in Section 3 we give explicit linear Pfaffian representations for arbitrary
forms of degree $4$ and $5$.\\

\noindent {\it Acknowledgements.}  I am very grateful to Agust\'{\i}n Garc\'{\i}a Iglesias for providing useful comments on a draft version of this article and for suggest
the Definition \ref{nice}. I wish to thank David Eisenbud for responding to my inquiries about the main result.


\section{Pfaffian and linear Pfaffian representations}

There are several equivalent definitions of the Pfaffian  of a skew-symmetric matrix. In our computations with {\small\textsf{Maple\texttrademark}}, we have used a
Laplace-type expansion. For other definitions of Pfaffian and its properties, we refer the reader to \cite{DW}, \cite[Appendix D]{FP}. For a history of the Pfaffians
we recommend \cite{K}.

\begin{definition}[Expansion along row 1]\label{definicion recursiva}
The {\it Pfaffian} $\Pf(A)$ of a $2d\times2d$ skew-symmetric matrix $A=[a_{ij}]$ with entries in a commutative ring $R$ with $1$ is defined recursively as
\begin{align*}
 \Pf\left(\left(
  \begin{array}{rrr}
    0 & a_{12} \\
    -a_{12} & 0
  \end{array}
  \right)\right) & :=   a_{12}  & \textrm{ if }d=1\,, \\
\Pf(A)   & :=  \sum_{j=2}^{2d}(-1)^j\cdot a_{1j}\cdot \textrm{Pf}\left(A^{[1,j]}\right)   &  \textrm{ if }d\geq2\,,
\end{align*}
where $A^{[1,j]}$ is the $(2d-2)\times(2d-2)$-matrix obtained from $A$ by deleting both the $(1,j)$-th rows and $(1,j)$-th columns.
\end{definition}

\begin{example}\label{ejemplo: grado 2 y 3}Fix $\variable_1,\dots,\variable_{10}\in R$. Let $M_2=\varuno\, [a_{ij}]+\vardos\, [b_{ij}]+\vartres\,
[c_{ij}]$ and $M_3=\varuno\, [\tilde{a}_{ij}]+\vardos\, [\tilde{b}_{ij}]+\vartres\, [\tilde{c}_{ij}]$ be the $4\times4$ skew-symmetric matrix,  respectively
the $6\times6$ skew-symmetric matrix, where

\begin{equation}\label{m_2: a,b,c}
[a_{ij}]=  \left[
\begin {smallmatrix}
0&1&\Theta_{4}&\Theta_{5}\\
\noalign{\medskip}&0&0&0\\
\noalign{\medskip}&&0&\Theta_{1}\\
\noalign{\medskip}*&&&0
\end {smallmatrix} \right]\,,
\quad
[b_{ij}]=  \left[
\begin {smallmatrix}
0&0&\Theta_{2}&\Theta_{6}\\
\noalign{\medskip}&0&0&-1\\
\noalign{\medskip}&&0&0\\
\noalign{\medskip}*&&&0
\end {smallmatrix} \right]\,,
\quad
[b_{ij}]=  \left[
\begin {smallmatrix}
0&0&0&\Theta_{3}\\
\noalign{\medskip}&0&1&0\\
\noalign{\medskip}&&0&0\\
\noalign{\medskip}*&&&0
\end {smallmatrix} \right]\,,
\end{equation}

\begin{equation}\label{m_3: a,b,c}
[\tilde{a}_{ij}]=  \left[
\begin {smallmatrix}
0&\Theta_{1}&\Theta_{10}&0&\Theta_{6}&\Theta_{5}\\
\noalign{\medskip}&0&0&0&0&0\\
\noalign{\medskip}&&0&-1&0&0\\
\noalign{\medskip}&&&0&0&0\\
\noalign{\medskip}&*&&&0&-1\\
\noalign{\medskip}&&&&&0
\end {smallmatrix} \right]\,,
\quad
[\tilde{b}_{ij}]=  \left[
\begin {smallmatrix}
0&\Theta_{{4}}&0&0&-1&\Theta_{{2}}\\
\noalign{\medskip}&0&-1&0&0&0\\
\noalign{\medskip}&&0&0&0&0\\
\noalign{\medskip}&&&0&-1&0\\
\noalign{\medskip}&*&&&0&0\\
\noalign{\medskip}&&&&&0
\end {smallmatrix} \right]\,,
\quad
[\tilde{c}_{ij}]=  \left[
\begin {smallmatrix}
0&0&\Theta_{{9}}&1&\Theta_{{7}}&\Theta_{{8}}\\
\noalign{\medskip}&0&0&0&0&1\\
\noalign{\medskip}&&0&0&\Theta_{{3}}&0\\
\noalign{\medskip}&&&0&0&0\\
\noalign{\medskip}&*&&&0&0\\
\noalign{\medskip}&&&&&0
\end {smallmatrix} \right]\,.
\end{equation}
Applying Definition \ref{definicion recursiva}, we obtain
\begin{align*}
\Pf(M_2)&=\variable_1\espacio x^2+\variable_2\espacio y^2+\variable_3\espacio z^2+\variable_4\espacio xy+\variable_5\espacio xz+\variable_6\espacio yz  \textrm{\,,}\\
\Pf(M_3)&=\variable_1\espacio x^3+\variable_2\espacio y^3+\variable_3\espacio z^3+\variable_4\espacio x^2y+\variable_5\espacio xy^2+\variable_6\espacio x^2z+\variable_7
\espacio xz^2+ \variable_8\espacio y^2z+\variable_9\espacio yz^2+\variable_{10}\espacio xyz\,.
\end{align*}

\end{example}


\subsection{Linear Pfaffian representation}

\begin{definition}\label{representation pfaffiana linear} Let $R$ be a commutative ring with $1$ and let $f(x_0,\dots,x_n)$ be a homogeneous polynomial in
$R[x_0,\dots,x_n]$ of degree $d$. Then $f$ admits a {\it linear Pfaffian $R$-representation} if there exist $2d\times2d$ skew-symmetric matrices $A_0,\dots,A_n$ with
entries in $R$ such that
$$
\Pf(x_0A_0+\cdots+x_nA_n)= f(x_0,\dots,x_n).
$$
\end{definition}
\noindent Such a matrix $M=x_0A_0+\cdots+x_nA_n$ is said to be a linear Pfaffian $R$-representation of $f$.

\begin{remark}
Example \ref{ejemplo: grado 2 y 3} shows that any form $f(x,y,z)$ in $R[x,y,z]$ of degree $d\leq3$ admits a linear Pfaffian $R$-representation. $M_2$ and $M_3$ are
{\it explicit} linear Pfaffian $R$-representations. 
\end{remark}


\section{Proof of the Theorem \ref{Grado 4 y 5}}

\subsection{Restricted search}
Notice that in the Example \ref{ejemplo: grado 2 y 3} the linear Pfaffian $R$-representations $M_2$ and $M_3$ are very simple: They only have $0$'s, $\pm1$'s and the
coefficients of an arbitrary homogeneous polynomial. With this in mind we introduce:
\begin{definition}\label{nice}A {\it nice} representation $M=\varuno\, [a_{ij}]+\vardos\, [b_{ij}]+\vartres\, [c_{ij}]$ of $f(x,y,z)$ is a Pfaffian $R$-representation such that:
\begin{itemize}
  \item[\textbullet] $a_{ij}, b_{ij}, c_{ij}\in\{-1,0,1\} \cup \{\pm\textrm{coefficients of }f(x,y,z)\textrm{ of degree }d\}$.
  \item[\textbullet] For each coefficient $\Theta_i$ of $f(x,y,z)$, there exists a unique entry $e_{ij}\in\{a_{ij}, b_{ij}, c_{ij}\}$ such that $e_{ij}=\pm\Theta_i$\,.
\end{itemize}
\end{definition}

For degrees $d\leq4$, we get a {\it nice} global representation. We also find a global representation for degree 5 (eqns. (\ref{m_5: parte 1}) and (\ref{m_5: parte 2})),
but it is not {\it nice}. We do not know if  such a {\it nice} global representation always exists.

\begin{proof}[Proof of Theorem \ref{Grado 4 y 5}] We must prove that for any
\begin{align*}
f(\varuno,\vardos,\vartres) &= \variable_1 \espacio \varuno^4 +\variable_2 \espacio \vardos^4+\variable_3 \espacio \vartres^4+
          +\variable_4 \espacio \varuno^3\vardos +\variable_5 \espacio \varuno^2\vardos^2+\variable_6 \espacio \varuno\vardos^3+
      +\variable_7 \espacio \varuno^3\vartres +\variable_8 \espacio \varuno^2\vartres^2+ \nonumber \\
      & \qquad   + \variable_9 \espacio \varuno\vartres^3+
        +\variable_{10} \espacio \vardos^3\vartres +\variable_{11} \espacio \vardos^2\vartres^2+\variable_{12} \espacio \vardos\vartres^3+
           +\variable_{13} \espacio \varuno^2\vardos\vartres +\variable_{14} \espacio \varuno\vardos^2\vartres+\variable_{15} \espacio \varuno\vardos\vartres^2\,,\nonumber \\
 g(\varuno,\vardos,\vartres) &= \variabledos_1 \espacio \varuno^5 +
                               \variabledos_2 \espacio \vardos^5+
                               \variabledos_3 \espacio \vartres^5+
      +\variabledos_{4} \espacio \varuno^4\vardos +
                \variabledos_{5} \espacio \varuno^3\vardos^2+
                \variabledos_{6} \espacio \varuno^2\vardos^3+
                \variabledos_{7} \espacio \varuno   \vardos^4+\nonumber \\
      & \qquad +\variabledos_{8} \espacio \varuno^4\vartres +
                \variabledos_{9} \espacio \varuno^3\vartres^2+
                \variabledos_{10} \espacio \varuno^2\vartres^3+
                \variabledos_{11} \espacio \varuno   \vartres^4+
   +\variabledos_{12} \espacio \vardos^4\vartres +
                \variabledos_{13} \espacio \vardos^3\vartres^2+
                \variabledos_{14} \espacio \vardos^2\vartres^3+\nonumber \\
      & \qquad
               + \variabledos_{15} \espacio \vardos   \vartres^4+ \variabledos_{16} \espacio \varuno^3\vardos\vartres +
                 \variabledos_{17} \espacio \varuno\vardos^3\vartres +
                 \variabledos_{18} \espacio \varuno\vardos\vartres^3
     + \variabledos_{19} \espacio \varuno^2\vardos^2\vartres +
                 \variabledos_{20} \espacio \varuno^2\vardos\vartres^2 +
                 \variabledos_{21} \espacio \varuno\vardos^2\vartres^2\,, \nonumber
\end{align*}
where $\variable_1,\dots,\variable_{15},\variabledos_1,\dots,\variabledos_{21}\in R$, there exist $8\times8$ skew-symmetric matrices $[a_{ij}], [b_{ij}]$ and
$[c_{ij}]$,  respectively, and $10\times10$ skew-symmetric matrices $\big[\tilde{a}_{ij}\big], \big[\tilde{b}_{ij}\big], \big[\tilde{c}_{ij}\big]$ with entries in $R$ such that
\begin{align*}
\textrm{\emph{Pf}}(\varuno\, \big[a_{ij}\big]+\vardos\, \big[b_{ij}\big]+\vartres\, \big[c_{ij}\big]) &=f(\varuno,\vardos,\vartres)\;, \\
\textrm{\emph{Pf}}(\varuno\, \big[\tilde{a}_{ij}\big]+\vardos\, \big[\tilde{b}_{ij}\big]+\vartres\, \big[\tilde{c}_{ij}\big]) &=g(\varuno,\vardos,\vartres)\;.
\end{align*}

Let $M_f=\varuno\, [a_{ij}]+\vardos\, [b_{ij}]+\vartres\, [c_{ij}]$ be the $8\times8$ skew-symmetric matrix, where
\begin{equation}\label{m_4: parte 1}
[a_{ij}]=  \left[
\begin {smallmatrix}
0&\Theta_{{1}}&\Theta_{{6}}&0&\Theta_{{4}}&0&0&0\\
\noalign{\medskip} &0&0&0&0&0&0&0\\
\noalign{\medskip} & &0&1&\Theta_{15}&0&0&\Theta_9\\
\noalign{\medskip}&&&0&0&0&0&0\\
\noalign{\medskip}&&&&0&1&0&\Theta_8\\
\noalign{\medskip}&&*&&&0&0&0\\
\noalign{\medskip}&&&&&&0&-1\\
\noalign{\medskip}&&&&&&&0
\end {smallmatrix} \right],
[b_{ij}]=  \left[
\begin {smallmatrix}
0&0&\Theta_{{2}}&0&\Theta_{5}&0&0&0\\
\noalign{\medskip}&0&0&0&0&-1&0&\Theta_{14}\\
\noalign{\medskip}&&0&0&\Theta_{11}&0&0&0\\
\noalign{\medskip}&&&0&0&0&0&-1\\
\noalign{\medskip}&&&&0&0&1&0\\
\noalign{\medskip}&&*&&&0&0&0\\
\noalign{\medskip}&&&&&&0&0\\
\noalign{\medskip}&&&&&&&0
\end {smallmatrix} \right],
[c_{ij}]=  \left[
\begin {smallmatrix}
0&\Theta_{7}&\Theta_{10}&0&\Theta_{13}&-1&0&0\\
\noalign{\medskip}&0&0&0&0&0&-1&0\\
\noalign{\medskip}&&0&0&\Theta_{12}&0&0&\Theta_{3}\\
\noalign{\medskip}&&&0&1&0&0&0\\
\noalign{\medskip}&&&&0&0&0&0\\
\noalign{\medskip}&&*&&&0&0&0\\
\noalign{\medskip}&&&&&&0&0\\
\noalign{\medskip}&&&&&&&0
\end {smallmatrix} \right].
\end{equation}

Using Definition \ref{definicion recursiva}, by a straightforward computation,  one gets that $\textrm{Pf}(M_f)=f(x,y,z)$. 

It remains to find some linear Pfaffian $R$-representation of $g(x,y,z)$.

Let $M_g=\big[\tilde{a}_{ij}\big]+\vardos\, \big[\tilde{b}_{ij}\big]+\vartres\, \big[\tilde{c}_{ij}\big]$  be the $10\times10$ skew-symmetric matrix, where
\begin{align}\label{m_5: parte 1}
\big[\tilde{a}_{ij}\big] &
=\left[
\begin {smallmatrix}
0&\Phi_{1}&0&0&0&0&0&0&0&0\\
\noalign{\medskip}&0&a_{2,3}&0&\Phi_{7}&0&\Phi_{5}&-\Phi_{9}&a_{2,9}&0\\
\noalign{\medskip}&&0&1&0&0&0&0&a_{3,9}&0\\
\noalign{\medskip}&&&0&0&0&0&0&0&0\\
\noalign{\medskip}&&&&0&1&0&0&0&0\\
\noalign{\medskip}&&&&&0&0&0&0&0\\
\noalign{\medskip}&&&&&&0&1&0&0\\
\noalign{\medskip}&&*&&&&&0&0&0\\
\noalign{\medskip}&&&&&&&&0&1\\
\noalign{\medskip}&&&&&&&&&0
\end {smallmatrix} \right] \textrm{\,,}&
\big[\tilde{b}_{ij}\big] &
=\left[
\begin {smallmatrix}
0&\Phi_{{4}}&1&0&0&0&0&0&0&0\\
\noalign{\medskip}&0&b_{2,3}&0&\Phi_{2}&0&\Phi_{6}&0&b_{2,9}&0\\
\noalign{\medskip}&&0&0&0&0&0&\Phi_{15}&b_{3,9}&0\\
\noalign{\medskip}&&&0&0&0&0&1&0&0\\
\noalign{\medskip}&&&&0&0&0&0&0&0\\
\noalign{\medskip}&&&&&0&0&b_{6,8}&b_{6,9}&1\\
\noalign{\medskip}&&&&&&0&0&1&0\\
\noalign{\medskip}&&*&&&&&0&0&0\\
\noalign{\medskip}&&&&&&&&0&0\\
\noalign{\medskip}&&&&&&&&&0
\end {smallmatrix} \right]\textrm{\,,}
\end{align}

\begin{align}\label{m_5: parte 2}
\big[\tilde{c}_{ij}\big] &
=\left[
\begin {smallmatrix}
0&\Phi_{{8}}&0&0&1&0&0&0&0&0\\
\noalign{\medskip}&0&c_{2,3}&0&0&0&0&0&0&1\\
\noalign{\medskip}&&0&0&0&0&0&\Phi_{3}&0&0\\
\noalign{\medskip}&&&0&0&0&0&0&1&0\\
\noalign{\medskip}&&&&0&0&0&0&0&0\\
\noalign{\medskip}&&&&&0&1&0&-\Phi_{10}&0\\
\noalign{\medskip}&&&&&&0&0&0&0\\
\noalign{\medskip}&&*&&&&&0&-\Phi_{11}&0\\
\noalign{\medskip}&&&&&&&&0&0\\
\noalign{\medskip}&&&&&&&&&0
\end {smallmatrix} \right]\,,
\end{align}
and where
\begin{align*}
a_{2,3}  & =   -2\,\Phi_{{6}}\Phi_{{5}}\Phi_{{3}}-\Phi_{{6}}\Phi_{{18}}-\Phi _{{5}}\Phi_{{14}}+1-\Phi_{{17}}\,,       \\
a_{2,9}  & =    -\Phi_{{6}}\Phi_{{5}}\Phi_{{11}}-\Phi_{{15}}{\Phi_{{5}}}^{2}- \Phi_{{5}}\Phi_{{21}}+\Phi_{{16}}\,,    \\
a_{3,9}  & =  -\Phi_{{5}}\Phi_{{3}}-\Phi_{{18}}\,,      \\
b_{2,3}  & =   -{\Phi_{{6}}}^{2}\Phi_{{3}}-\Phi_{{12}}-\Phi_{{6}}\Phi_{{14}}\,,   \\
b_{2,9}  & =    {\Phi_{{5}}}^{2}\Phi_{{3}}+\Phi_{{5}}\Phi_{{18}}-{\Phi_{{6}}}^{2}\Phi_{{11}}-\Phi_{{6}}\Phi_{{15}}\Phi_{{5}}-\Phi_{{6}}\Phi_{{21}}-
\Phi_{{9}}+\Phi_{{19}}\,,   \\
b_{3,9}  & =   -\Phi_{{6}}\Phi_{{3}}-\Phi_{{14}}\,,   \\
b_{6,8}  & =   \Phi_{{6}}\Phi_{{11}}+\Phi_{{15}}\Phi_{{5}}+\Phi_{{21}}\,,   \\
b_{6,9}  & =   -\Phi_{{5}}\Phi_{{11}}-1-\Phi_{{20}}\,.   \\
c_{2,3}  & =   -\Phi_{6}\Phi_{15}-\Phi_{13}\,.   \\
\end{align*}
Similarly, we get that $\textrm{Pf}(M_g)=g(x,y,z)$, which finishes the proof of the theorem.
\end{proof}




\end{document}